\newenvironment{nouppercase}{%
  \renewcommand{\uppercasenonmath}[1]{}}{}
\let\myuppercase\uppercase%
\title[\texorpdfstring{\myuppercase{Smoothly slice knots with Alex. pol. 1 and high} $u$}{Smoothly slice knots with Alex. pol. 1 and high u}]{\myuppercase{Smoothly slice knots with Alexander polynomial~1 and high unknotting number}}
\author{Lukas Lewark}
\address{ETH Z\"urich, R\"amistrasse 101, 8092 Z\"urich, Switzerland}
\email{\myemail{llewark@math.ethz.ch}}
\urladdr{\url{https://people.math.ethz.ch/~llewark/}}
\subjclass{57K10, 57K18}
\let\cref\Cref
\crefname{subsection}{section}{sections}
\Crefname{subsection}{Section}{Sections}
\Crefname{enumi}{}{}
\crefname{equation}{}{}
\newcommand{\myemail}[1]{\href{mailto:#1}{#1}}
\newcommand{\qua}{\hskip 0.4em \ignorespaces}
\def\arxiv#1{\relax\ifhmode\unskip\qua\fi
\href{http://arxiv.org/abs/#1}%
{\tt arXiv:\penalty -100\unskip#1}}
\def\MR#1{\relax\ifhmode\unskip\qua\fi
\href{https://mathscinet.ams.org/mathscinet-getitem?mr=#1}{\tt MR#1}}
\def\ZB#1{\relax\ifhmode\unskip\qua\fi
\href{https://zbmath.org/?q=an:#1}{\tt Zbl\:#1}}
\def\xox#1{\csname xx#1\endcsname}
\renewenvironment{thebibliography}[1]{
  \begin{oldthebibliography}{#1}\small
    \setlength{\itemsep}{.5ex}
    \setlength{\parskip}{0em}
}
{
  \end{oldthebibliography}
}
  \def\unskip{}%
  \def\\{}%
  \def\texttt#1{<#1>}%
\let\stdthebibliography\thebibliography
\let\stdendthebibliography\endthebibliography
\declaretheorem{lemma}
\newtheorem{theorem}[lemma]{Theorem}
\newtheorem{proposition}[lemma]{Proposition}
\newtheorem{conjecture}[lemma]{Conjecture}
\theoremstyle{definition}
\newtheorem{remark}[lemma]{Remark}
\DeclareMathOperator{\Gr}{Gr}
\DeclareMathOperator{\HFK}{HFK}
\DeclareMathOperator{\CFK}{CFK}
\newcommand{\MM}{M\!\!\!M}
\renewcommand{\MM}{m\hspace{-5.8pt}m}
\renewcommand{\MM}{\includegraphics{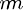}}
\begin{document}
\begin{abstract}
We prove the existence of a smoothly doubly slice, amphicheiral knot with Alexander polynomial~1 and unknotting number~5.
\end{abstract}
\begin{nouppercase}
\maketitle
\end{nouppercase}
\section{Introduction}
The unknotting number $u(K)$ of a knot $K \subset S^3$
is defined as the minimal number of crossing changes required to make $K$ trivial~\cite{zbMATH03026573}.
It is one of the most intuitive and classical measures of the complexity of a knot.
At the same time, the computation of $u(K)$ appears out of reach even for many simple knots~$K$.
One cause for this predicament is that, to put it pointedly, there are no true lower bounds for the unknotting number. That is to say, virtually all known computable lower bounds for $u$
actually are lower bounds for other geometric measures of knot complexity that themselves bound $u$ from below.
The following two instances exemplify this phenomenon and reflect broader patterns among other lower bounds.
Firstly, the $\tau$-invariant from knot Floer homology~\cite{zbMATH02057402,rasmussen2003floer} yields the lower bound $|\tau(K)| \leq u(K)$.
Yet, we also find $|\tau(K)| \leq g_4(K) \leq u(K)$ for the smooth slice genus~$g_4(K)$.
Secondly, the dimension of the first homology space with coefficients in any field of the branched double covering of $S^3$ along $K$ is a lower bound for~$u(K)$~\cite{zbMATH03026573}.
Yet, this quantity is also a lower bound for the minimal number of crossing changes required to reach a knot with Alexander polynomial~1.
In particular, neither of those two lower bounds for $u$ can be used to prove the following main result of this short note.

\begin{theorem}\label{main:thm}
There exists a smoothly slice knot with Alexander polynomial~1
whose unknotting number equals~5.
\end{theorem}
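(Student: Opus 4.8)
The plan is to split the proof into a construction with an easy upper bound and a matching lower bound, the latter being the crux. First I would write down an explicit knot $K$ by taking a small building block --- a tangle, or a short band description --- and closing it off in a mirror-symmetric fashion, so that $K$ manifestly appears as an equatorial cross-section of an unknotted $2$-sphere in $S^4$ and carries an orientation-reversing mirror symmetry; double sliceness then gives smooth sliceness for free. That $\Delta_K = 1$ would be checked directly from a Seifert matrix of the symmetric diagram (equivalently, by exhibiting the splitting of the Seifert form witnessing double sliceness). For the upper bound $u(K) \le 5$ I would exhibit $5$ crossings of a chosen diagram of $K$ whose simultaneous change produces the unknot; a balanced such collection should be readable off from the symmetry of the construction.

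The hard part is the lower bound $u(K) \ge 5$. As the introduction stresses, every routine lower bound is useless here: $|\tau(K)|$, and anything that factors through the smooth slice genus, vanishes because $K$ is slice; and $\dim_{\mathbb F} H_1(\Sigma_2(K);\mathbb F)$, and anything that factors through the distance to the Alexander-polynomial-one knots, vanishes because $\Delta_K = 1$ (note that $\Sigma_2(K)$ is then an integral homology sphere bounding an integral homology ball, so the usual $4$-manifold and $\mathrm{Spin}^c$ refinements of that bound vanish as well). One therefore needs a lower bound for $u$ of a genuinely different character, blind to both the slice genus and the Alexander polynomial. The natural candidate is a bound coming from the torsion of Bar--Natan (or knot Floer) homology: such torsion-order invariants are known to be bounded above by $u(K)$, yet they are not bounded above by $g_4(K)$ (they are nonzero on suitable slice knots) nor by the distance to $\Delta = 1$ (they are nonzero on suitable Whitehead doubles). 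It then remains to verify that the relevant torsion-order invariant of our $K$ equals $5$, which for a concrete knot is a finite computation run from a diagram.

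The real obstacle --- and what forces both the intricate shape of $K$ and the value $5$ --- is the tension between the two halves. The torsion-order lower bound only reaches $5$ on knots of fairly large genus and combinatorial complexity, while $u(K) \le 5$ demands that $K$ be unknottable with very few crossing changes, and the double-slice and amphicheiral symmetries constrain the Seifert form tightly. Most of the work is thus in engineering the building block so that its symmetric closure is simultaneously doubly slice with $\Delta = 1$, unknottable by an explicit symmetric set of five crossing changes, and still carries enough homological torsion to pin the unknotting number down to exactly $5$; once such a $K$ is found, the proof reduces to recording the Seifert matrix, the unknotting moves, and the homology computation.
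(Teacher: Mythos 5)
You correctly identify the single usable tool --- a torsion-order invariant in a knot homology that bounds the unknotting number without factoring through the slice genus or the distance to $\Delta=1$ --- and your observations on why the standard bounds all die (including the $\mathrm{Spin}^c$ refinements) are sound. But the execution you propose is precisely the route the paper flags as unclear/infeasible, and it omits the maneuver that actually makes the argument go through.

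The gap is in your final step, ``it then remains to verify that the relevant torsion-order invariant of our $K$ equals $5$, which for a concrete knot is a finite computation run from a diagram.'' For any knot that is simultaneously doubly slice, $\Delta=1$, amphicheiral, and has large homological torsion, that computation is well out of reach: the paper notes that Bar--Natan homology cannot currently be computed for the relevant examples beyond $\ell\le 3$, and even for knot Floer homology it works with $\widehat{\HFK}$ of the auxiliary knots $\MM_\ell$ (which are \emph{not} slice and do \emph{not} have $\Delta=1$, but do have manageable diagrams), extracting the $U$-torsion order of $\HFK^-$ indirectly by a parity/degree count rather than by computing $\HFK^-$ outright. Also, the paper explicitly remarks that directly engineering a slice $\Delta=1$ knot with provably large unknotting number --- which is what your construction asks for --- is exactly what is not known how to do. The missing idea is to decouple the two requirements: first take a knot ($\MM_6$) designed to have $u\le 6$ (one full twist on $12$ balanced strands) and torsion order $\ge 6$ (checked by computer on $\widehat{\HFK}$); then use a null-homologous-twist/$S$-equivalence proposition to produce a knot $J'$ with $\Delta_{J'}=1$ that is a \emph{single crossing change} away from $\MM_6$; then invoke the Gordian-distance bound $|\mathfrak{t}(K)-\mathfrak{t}(J)|\le d_G(K,J)$ to conclude $\mathfrak{t}(J')\ge 5$ without ever computing anything about $J'$; finally set $L=J'\#(-J')$ to get double sliceness and amphicheirality for free, and use $\mathfrak{t}(K\#J)=\max(\mathfrak{t}(K),\mathfrak{t}(J))$ so the torsion survives the connected sum. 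Your plan has no analog of the $S$-equivalence step or the Gordian-distance propagation, and without them the torsion computation you need cannot be carried out.
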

One easily constructs families of smoothly slice, Alexander polynomial~1 knots that, intuition tells us, should have growing unknotting number.
For example, embarking from any non-trivial knot $K$ with Alexander polynomial~1,
the $n$-fold connected sum $J_n$ of $K\#\, {-K}$ is smoothly slice with Alexander polynomial~1, and it is tempting to presume $J_n$ has unknotting number at least~$2n$.
However, it appears quite unclear how to prove this presumption for general knots~$K$ with Alexander polynomial~$1$.
In fact, since the unknotting number is not additive~\cite{arXiv:2506.24088}, it might very well be incorrect.

Scharlemann's theorem that knots with $u=1$ are prime~\cite{zbMATH03921530} implies $u(J_n) \geq 2$.
But to the best knowledge of the author, smoothly slice knots with Alexander polynomial~1 and $u > 2$ have not previously been proven to exist.
Indeed, the author is only aware of a single known type of 
computable lower bounds for the unknotting number
that might be employed for this purpose:
the bounds coming from torsion orders in knot homologies.
In the following section, we will use such a bound to establish \cref{main:thm}.
In addition to the properties claimed in \cref{main:thm},
the knot whose existence we prove will be smoothly doubly slice and amphicheiral.
\section{Proof}
\begin{figure}[b]
\centering
\includegraphics[scale=.7]{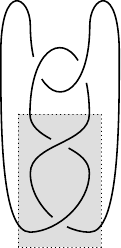}\hspace{2cm}
\includegraphics[scale=.7]{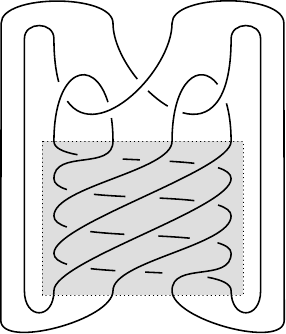}\\[5mm]
\includegraphics[scale=.7]{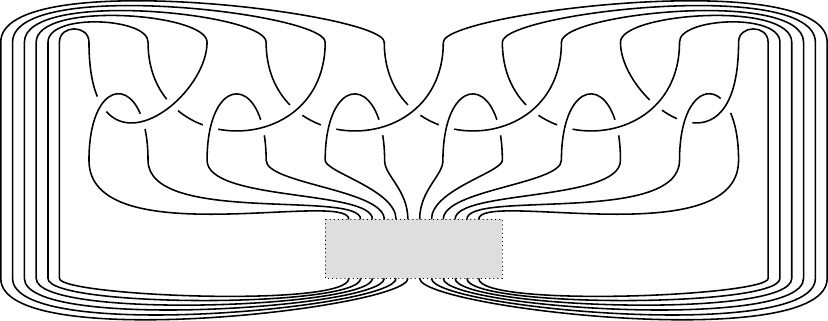}
\caption{The Manolescu--Marengon knots $\MM_1$, $\MM_2$,~$\MM_6$. The gray box for each of the $\MM_{\ell}$ contains a right-handed full twist on $2\ell$ strands. For a diagram of~$\MM_3$, see~\cite{zbMATH07144520}.
$\MM_1$ is the figure-eight knot.
$\MM_2$ is 17nh76 in the tables~\cite{regina}.
}
\label{fig:knots}
\end{figure}
Our starting point is the knot constructed by Manolescu--Marengon~\cite{zbMATH07144520}
to refute the knight move conjecture in Khovanov homology.
That knot can be seen as the third member of an infinite family $\MM_1, \MM_2, \MM_3,\ldots$
of knots, as indicated in \cref{fig:knots}.
Two properties of those knots $\MM_{\ell}$ are relevant here.
Firstly, each of them is evidently related to the unknot by a \emph{null-homologous twist},
i.e.\ by a $(+1)$- or $(-1)$-surgery along the boundary of a disk that has algebraic intersection number zero with the knot (more informally, grabbing an even number of strands of the knot, half of which are oriented in each direction, and tying a full twist into them).
Secondly, the knot Floer homology of those knots conjecturally satisfies the following.
\begin{conjecture}\label{conj:torsion}
The maximal order of $U$-torsion in $\HFK^-(\MM_{\ell})$ equals~$\ell$.
\end{conjecture}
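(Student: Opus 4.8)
The plan is to compute $\HFK^-(\MM_{\ell})$ directly as a finitely generated module over $\mathbb{F}[U]$ and read off the maximal order of its $U$-torsion. Since $\MM_{\ell}$ agrees with the unknot outside the twist region (see \cref{fig:knots}), it is natural to cut $S^3$ along the boundary sphere of a neighbourhood of the twisting disk, exhibiting $\MM_{\ell}$ as the union of one fixed $4\ell$-ended tangle with the standard full twist $\Delta_{2\ell}^2$ on $2\ell$ strands. I would compute the (bordered--sutured) tangle knot Floer invariant of the fixed side, express the insertion of the full twist as the action of the known invariants of the braid generators $\sigma_i$ composed in the pattern $\Delta_{2\ell}^2$, and tensor the two to obtain a model for $\CFK^{\infty}(\MM_{\ell})$, and hence for $\HFK^-(\MM_{\ell})$. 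A more hands-on alternative is an induction on the number of crossings in the twist region, using the oriented skein exact sequence in $\HFK^-$ to relate $\MM_{\ell}$, the knot obtained by switching one crossing of the twist, and the oriented resolution at that crossing.

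The upper bound $\le \ell$ should require no computation. The null-homologous twist relating $\MM_{\ell}$ to the unknot can be undone one band at a time by $\ell$ successive crossing changes, and a crossing change alters the maximal $U$-torsion order by at most one, by the cobordism-map estimates underlying the inequality $\mathrm{Ord}_U \le u$ (Alishahi--Eftekhary, Juh\'asz--Miller--Zemke). As the unknot has torsion order $0$, this gives $\mathrm{Ord}_U(\HFK^-(\MM_{\ell})) \le \ell$; the same bound is also visible directly once the explicit model above is in hand.

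The lower bound $\ge \ell$ is the heart of the matter, and here a genuine computation is unavoidable: the maximal $U$-torsion order is bounded below by none of the classical concordance invariants — that is precisely the point of the present note, since otherwise one of the lower bounds discussed in the introduction would apply to it. From the model for $\CFK^{\infty}(\MM_{\ell})$ I would exhibit a distinguished generator $x$ with $U^{\ell-1}x \neq 0$ but $U^{\ell}x = 0$. In the immersed-curve description the mechanism is, roughly, that each additional full twist winds the curve once more around the cylinder, so the torsion tower grows in length by exactly one per twist and $\HFK^-(\MM_{\ell})$ acquires an $\mathbb{F}[U]/U^{\ell}$ summand. The cases $\ell = 1$ (the figure-eight, with $\HFK^- \cong \mathbb{F}[U] \oplus \mathbb{F}[U]/U$), $\ell = 2$ (the knot 17nh76) and $\ell = 3$ (the Manolescu--Marengon knot) serve both as the base of the induction and as numerical checks, and can be handled by hand or with existing knot Floer software.

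The main obstacle is uniformity in $\ell$: bordered Floer homology is organised around a fixed parametrised boundary surface, whereas the twist region here involves $2\ell$ strands, so neither the fixed tangle nor $\Delta_{2\ell}^2$ is literally independent of $\ell$, and one cannot simply iterate a single operator. One therefore needs either a closed-form description of the relevant tensor product valid for all $\ell$ at once — most plausibly by recognising the twist region as an iterated cabling to which a satellite formula for $\CFK^{\infty}$ applies — or an inductive argument robust enough to control, at every stage, how the connecting homomorphism of the skein exact triangle interacts with the $U$-action. Showing that this interaction never produces the cancellation that would shorten the torsion tower, and thereby securing the lower bound, is the delicate step; the rest is bookkeeping.
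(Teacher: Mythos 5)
The statement you were asked to prove is labelled a \emph{conjecture} in the paper, and the paper does not prove it: immediately after stating it, the author writes that only a \emph{partial} proof will be given, namely the lower bound $\mathfrak{t}(\MM_\ell)\geq\ell$ for $1\leq\ell\leq 6$. Your write-up correctly recognises that the lower bound is the hard part and that no uniform-in-$\ell$ argument is currently available; the ``main obstacle'' paragraph is essentially an honest admission that the proposal does not close the gap. So what you have is a research plan, not a proof, and that is consistent with the statement being open.

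Your upper-bound argument matches the paper's exactly: a null-homologous twist on $2\ell$ strands is realised by $\ell$ crossing changes, so $u(\MM_\ell)\leq\ell$, and the torsion order changes by at most one per crossing change (this is \cref{thm:utorsion} applied to the unknot), giving $\mathfrak{t}(\MM_\ell)\leq\ell$. For the partial lower bound, however, the paper's route is far more elementary than what you propose. Rather than building a bordered/tangle model of $\CFK^-$ or running a skein induction, the author uses the structure theorem for $\Gr\CFK^-$ over the PID $\mathbb{F}_2[U]$: it decomposes into one free rank-one summand plus two-step complexes $\mu^i A^j\mathbb{F}_2[U]\xrightarrow{U^n}\mu^{i+2n-1}A^{j+n}\mathbb{F}_2[U]$, and setting $U=0$ shows that $\widehat{\HFK}$ admits a homogeneous basis in which all but one vector are paired with a partner at bigrading offset $(2n-1,n)$. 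One then feeds $\MM_\ell$ ($\ell\leq 6$) to Szab\'o's \texttt{hfk} program and checks three bigraded dimension counts: $\dim\widehat{\HFK}_{\mu=A=0}\geq 2$, while $\widehat{\HFK}_{\mu=2A+1}=0$ for $-\ell+1\leq\mu\leq-1$ and $\widehat{\HFK}_{\mu=2A-1}=0$ for $\mu\geq 1$. This forces one basis vector at $(0,0)$ to have a partner only at distance $n\geq\ell$, hence a $U$-torsion summand of order $\geq\ell$. No model of the whole complex is needed, just the vanishing of a diagonal strip in $\widehat{\HFK}$. Your proposed bordered/satellite/skein approach would, if carried out, be strictly stronger (it would attack all $\ell$ at once), but as you yourself note the uniformity problem is unresolved, and in fact controlling the connecting maps of the skein triangle against the $U$-action is exactly the kind of thing that has not been done; the immersed-curve heuristic you invoke is suggestive but not an argument, since these knots are not known to have thin or otherwise constrained $\CFK^\infty$. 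A small factual slip: $\HFK^-(4_1)\cong\mathbb{F}_2[U]\oplus(\mathbb{F}_2[U]/U)^2$, not $\mathbb{F}_2[U]\oplus\mathbb{F}_2[U]/U$, since $\widehat{\HFK}(4_1)$ has total dimension $5$.
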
%
Let us briefly review the essential aspects of knot Floer homology.
The minus version of knot Floer homology associates
to every knot $K$ a chain complex $\CFK^-(K)$ of free $\mathbb{F}_2[U]$-modules of finite total rank,
well-defined up to homotopy equivalence~(see for example the survey~\cite{zbMATH06696663}).
The chain complex carries the \emph{Maslov grading}~$\mu$
and the \emph{Alexander filtration}~$A$.
The differential~$\partial^-$ lowers the grading $\mu$ by~$1$, and respects the filtration~$A$.
On the associated graded chain complex $\Gr\CFK^-(K)$ of free $\mathbb{F}_2[U]$-modules, the filtration $A$ becomes a grading, which we also denote by~$A$, making $\Gr\CFK^-(K)$ a doubly graded complex. 
Its differential $\Gr\partial^-$ lowers $\mu$ by~$1$ and preserves~$A$;
multiplication by $U$ lowers $\mu$ by~$2$ and $A$ by~$1$.
Because $\mathbb{F}_2[U]$ is a PID, $\Gr\CFK^-(K)$ has,
up to graded homotopy equivalence,
a decomposition as a direct sum of
complexes of rank~1 over~$\mathbb{F}_2[U]$, and complexes of the form
\[
\mu^{i}A^{j}\mathbb{F}_2[U] \xrightarrow{U^n} \mu^{i + 2n - 1}A^{j + n}\mathbb{F}_2[U]
\]
for varying $i,j,n\in\mathbb{Z}$ with $n\geq 1$~(cf.~\cite[Sec.~3]{zbMATH07311838}).
Since the homology of $\Gr\CFK^-(K) \otimes_{\mathbb{F}_2[U]} \mathbb{F}_2[U, U^{-1}]$
is isomorphic to $\mathbb{F}_2[U, U^{-1}]$,
the above direct sum decomposition contains exactly one summand complex of rank~1.
Let us denote the maximal $n$ in the decomposition by~$\mathfrak{t}(K)$,
and let us denote the homology of~$\Gr\CFK^-(K)$,
which consists of (generally non-free) graded $\mathbb{F}_2[U]$-modules, by~$\HFK^-(K)$.
Then $\mathfrak{t}(K)$ equals the maximal order of $U$-torsion in~$\HFK^-(K)$, to which \cref{conj:torsion} refers.
We now provide a partial proof of \cref{conj:torsion}.

\begin{lemma}
For $1\leq \ell \leq 6$, the maximal order of $U$-torsion in $\HFK^-(\MM_{\ell})$ is at least~$\ell$.
\end{lemma}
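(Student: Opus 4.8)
The plan is to determine the $(\mu,A)$-bigraded homotopy type of $\Gr\CFK^-(\MM_\ell)$ for each $\ell\in\{1,\dots,6\}$ --- or, since only a lower bound is wanted, merely to exhibit in it a direct summand of the shape $\mu^iA^j\,\mathbb{F}_2[U]\xrightarrow{U^n}\mu^{i+2n-1}A^{j+n}\,\mathbb{F}_2[U]$ with $n\ge\ell$ --- and then to read off $\mathfrak{t}(\MM_\ell)\ge\ell$. For $\ell=1$ this is classical: $\MM_1$ is the figure-eight knot, and $\Gr\CFK^-(\MM_1)$ is a free rank-$1$ summand together with boxes all of exponent $1$, so $\mathfrak{t}(\MM_1)=1$. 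For $\ell=2,3$ one recomputes from the diagrams (a diagram of $\MM_3$ appears in~\cite{zbMATH07144520}; $\MM_2$ is the knot 17nh76 of~\cite{regina}). The genuinely new cases are $\ell=4,5,6$. I stress that $\widehat{\HFK}(\MM_\ell)$ as a bigraded vector space does not suffice, since it does not record which of its generators pair up into boxes; one really needs the differential of $\Gr\CFK^-(\MM_\ell)$ --- though only its associated graded, not the filtration on $\CFK^-$ itself.

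Running a direct algorithm for $\Gr\CFK^-$ on a planar diagram of $\MM_\ell$ --- grid diagrams, or the cube of resolutions --- is hopeless for $\ell\ge4$, because the right-handed full twist on $2\ell$ strands alone contributes $2\ell(2\ell-1)$ crossings. Instead I would exploit the tangle structure exhibited in \cref{fig:knots}: $\MM_\ell$ is assembled from the full twist $\Delta^2_{2\ell}$ on $2\ell$ strands together with a structured closure tangle (the remainder of the diagram outside the gray box). Both of these pieces are highly regular braided tangles, so their ($U$-equivariant) tangle/bordered invariants are computable --- as iterated compositions of the trivially small invariants of elementary braid generators, or from the explicit, highly structured bimodule of a full twist --- and pairing them yields $\Gr\CFK^-(\MM_\ell)$, hence $\mathfrak{t}(\MM_\ell)$. (Alternatively, since each $\MM_\ell$ arises from the unknot by a $\pm1$-framed null-homologous twist, one could instead feed the corresponding two-component surgery link into the link surgery formula; this does not seem simpler.) Because only $\mathfrak{t}(\MM_\ell)\ge\ell$ is needed, it in fact suffices to locate in the paired complex a subquotient isomorphic to a box of exponent $\ell$ in the appropriate bigradings, which survives any failure to fully reduce.

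I expect the main obstacle to be the $\ell=6$ case: the bordered algebra on a dozen points is large and the long braid composition produces unwieldy intermediate complexes, so keeping the computation within time and memory while trusting the implementation is the crux. Several things should make it manageable. The full twist is extremely structured --- its tangle/bordered invariant (equivalently, the Dehn-twist action on immersed curves with local systems) has a predictable, ``banded'' shape amenable to hand-checkable bookkeeping, so the composition is far from a generic $2\ell(2\ell-1)$-step assembly. The $U$-torsion of order $\ell$ ought already to be visible in the invariant of a suitable sub-tangle of the full twist, permitting early termination once a stabilising box of exponent $\ell$ appears. And the answers for $\ell\le3$, together with the conjugation symmetry of $\Gr\CFK^-$ and the graded Euler characteristic (the Alexander polynomial of $\MM_\ell$), give independent cross-checks. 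A last subtlety: the immersed-curve formalism is cleanest in the hat flavour, so one should work throughout with the minus flavour of the bordered invariants, so as to retain the $U$-action that carries the torsion order.
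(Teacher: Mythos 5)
The central claim of your proposal is incorrect, and because of it you set out on an unnecessarily difficult path that you never actually complete. You assert that ``$\widehat{\HFK}(\MM_\ell)$ as a bigraded vector space does not suffice, since it does not record which of its generators pair up into boxes; one really needs the differential of $\Gr\CFK^-(\MM_\ell)$.'' The paper's proof rests precisely on the observation that this is \emph{not} true here. While the bigraded vector space does not determine the pairing in general, the gradings are rigid enough to constrain it: a box $\mu^{i}A^{j}\mathbb{F}_2[U] \xrightarrow{U^n} \mu^{i + 2n - 1}A^{j + n}\mathbb{F}_2[U]$ contributes a pair of $\widehat{\HFK}$-generators at $(\mu,A)=(i,j)$ and $(i+2n-1,j+n)$. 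Thus a generator sitting at $(\mu,A)=(0,0)$, if it is paired at all, must have its partner either at $(2n-1,n)$ (if it is the source) or at $(1-2n,-n)$ (if it is the target), i.e.\ along the lines $\mu=2A-1$ with $\mu\ge1$ or $\mu=2A+1$ with $\mu\le-1$. Szab\'{o}'s program computes $\widehat{\HFK}(\MM_\ell)$ for $\ell\le6$, and one simply reads off that both diagonals are empty in the relevant range (for $\mu=2A+1$, empty for $-\ell+1\le\mu\le-1$), while $\dim\widehat{\HFK}(\MM_\ell)_{\mu=A=0}\ge2$. Since at most one basis vector belongs to the single rank-1 free summand, at least one generator at $(0,0)$ is paired, and the vanishing along the diagonals forces that pairing to have exponent $n\ge\ell$. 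This gives $\mathfrak{t}(\MM_\ell)\ge\ell$ without ever touching $\Gr\partial^-$.

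By contrast, your plan to compute $\Gr\CFK^-(\MM_\ell)$ via bordered or tangle invariants (or immersed curves in the minus flavour) is a reasonable fallback strategy, and would in principle also determine $\mathfrak{t}(\MM_\ell)$, but it is far more demanding than what is needed; you yourself flag that $\ell=6$ is at the edge of feasibility and your proposal stops at ``I expect the main obstacle to be$\ldots$'' without exhibiting a box of exponent $\ell$ in any concrete case. As it stands, even the cases $\ell\le3$ are only claimed to be recomputable. So the proposal is a research programme rather than a proof, and it explicitly dismisses the much cheaper route --- one $\widehat{\HFK}$ computation per knot plus a short combinatorial argument about where partners can live --- that actually closes the lemma.
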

\begin{proof}
We will rely on Szab\'{o}'s computer program \cite{hfk} to calculate the hat version of knot Floer homology $\widehat{\HFK}(K)$, which is
the homology of $\CFK^-(K) \otimes_{\mathbb{F}_2[U]} \mathbb{F}_2[U]/(U)$.
Note that $\widehat{\HFK}(K)$---a doubly graded vector space over $\mathbb{F}_2$---may be obtained from the direct sum decomposition of $\Gr \CFK^-(K)$ discussed above simply by setting~${U = 0}$. So every summand complex with differential $U^n$
corresponds to a pair of homogeneous homology classes in $\widehat{\HFK}(K)$
with $(\mu,A)$-gradings $(i,j)$ and~$(i + 2n - 1, j + n)$.
One may pick a homogeneous basis for~$\widehat{\HFK}(K)$ where all except one of the basis vectors are paired up like this.
The homologies $\widehat{\HFK}(\MM_{\ell})$ as calculated with Szab\'{o}'s program \cite{hfk}
	are collected in \cref{table:hom1_2,table:hom3_4,table:hom5_6} at the end of the paper (see also~\cite{github:LLewark/hfk-computations}).
For~$1 \leq \ell \leq 6$, one finds:
\begin{align*}
\dim \widehat{\HFK}(\MM_{\ell})_{\mu = A = 0}  & = 2 \quad\text{except for $\ell = 1$, where it is 3}, \\
\dim \widehat{\HFK}(\MM_{\ell})_{\mu = 2A + 1} & = 0 \quad\text{for } {-\ell + 1\leq \mu \leq -1},\\
\dim \widehat{\HFK}(\MM_{\ell})_{\mu = 2A - 1} & = 0 \quad\text{for } \mu \geq 1.
\end{align*}
So one of the two (or three if $\ell = 1$) basis vectors at grading $\mu = A = 0$
can only be paired up by $U^n$ with~$n\geq \ell$.
This proves the claim.
\end{proof}
The maximal order of $U$-torsion in $\HFK^-(K)$ has, amongst others, the following application.
\begin{theorem}[\cite{zbMATH07305772}]\label{thm:utorsion}
The Gordian distance between knots $K$ and~$J$
is bounded from below by~$|\mathfrak{t}(K) - \mathfrak{t}(J)|$.
\end{theorem}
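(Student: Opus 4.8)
Since the Gordian distance satisfies the triangle inequality, it suffices to treat a single crossing change: we show that if knots $K$ and $J$ differ by one crossing change then $|\mathfrak{t}(K)-\mathfrak{t}(J)|\le 1$, and then induct on the number of crossing changes.

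So assume $K$ and $J$ differ by one crossing change. The plan is to produce $\mathbb{F}_2[U]$-equivariant filtered chain maps $f\colon\CFK^-(K)\to\CFK^-(J)$ and $g\colon\CFK^-(J)\to\CFK^-(K)$ whose composites $g\circ f$ and $f\circ g$ are both filtered chain homotopic to multiplication by~$U$. Such maps arise from the functorial decorated-link-cobordism maps of Zemke. Indeed, a crossing change exhibits $K$ and $J$ as the two ends of an immersed annulus in $S^3\times[0,1]$ with a single transverse double point -- equivalently, of the embedded genus-one cobordism obtained by resolving that double point, and also of the composite $K\rightsquigarrow K_0\rightsquigarrow J$ of the two oriented saddle cobordisms through the oriented resolution~$K_0$. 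Taking $f$ and $g$ to be the maps induced by this cobordism read in the two directions, the stacked cobordisms computing $g\circ f$ and $f\circ g$ each contain a cancelling pair of saddles along a common band, i.e.\ a standard double-point (Whitney-disk) configuration, whose evaluation is multiplication by~$U$; the grading shifts of $f$ and $g$ add up to those of $U$ because a crossing change shifts the relevant $(\mu,A)$-gradings by exactly the amounts by which multiplication by~$U$ does. Passing to associated graded complexes makes $f$ and $g$ maps of $\Gr\CFK^-$, hence on homology yields $\mathbb{F}_2[U]$-module maps $f_*\colon\HFK^-(K)\to\HFK^-(J)$ and $g_*\colon\HFK^-(J)\to\HFK^-(K)$ with $g_*\circ f_*=U$ and $f_*\circ g_*=U$.

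The rest is algebra over the PID $\mathbb{F}_2[U]$. Recall $\mathfrak{t}(K)$ equals the maximal order of a $U$-torsion element of $\HFK^-(K)$; choose $x\in\HFK^-(K)$ with $U^{\,\mathfrak{t}(K)}x=0$ and $U^{\,\mathfrak{t}(K)-1}x\ne 0$, and put $n=\mathfrak{t}(K)$ and $y=f_*(x)$. Then $U^n y=f_*(U^n x)=0$; and if $U^{n-2}y=0$ we would get $U^{n-1}x=g_*f_*(U^{n-2}x)=g_*(U^{n-2}y)=0$, contradicting the choice of $x$. Hence $y$ is a $U$-torsion element of order at least $n-1$, so $\mathfrak{t}(J)\ge\mathfrak{t}(K)-1$; the symmetric argument with $g_*$ gives $\mathfrak{t}(K)\ge\mathfrak{t}(J)-1$, and therefore $|\mathfrak{t}(K)-\mathfrak{t}(J)|\le 1$.

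The main obstacle is the middle step: proving that the stacked cobordism map is \emph{exactly} multiplication by~$U$. The whole numerical content lives here -- if the composite were zero one would get no bound at all, and if it were $U^2$ one would only get $\le 2$ per crossing change -- so one needs the order of the composite to be exactly~$1$. Establishing this requires care with the functoriality and grading conventions of the decorated cobordism maps, with the interplay between $\CFK^-$ and its associated graded object (one wants genuinely filtered maps, so that everything descends to $\Gr\CFK^-$), and with the choice of decorations on the two saddles so that the double-point model is the standard one; the grading bookkeeping for a crossing change is precisely what pins the constant down to~$U$. A secondary subtlety is that $K_0$ is typically a two-component link, so one either carries the link Floer package throughout or, equivalently, routes the argument through the unoriented skein exact triangle, where the same phenomenon reappears as the nilpotency of composites of the connecting homomorphisms.
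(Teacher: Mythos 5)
The paper does not prove this theorem; it is stated as a citation to~\cite{zbMATH07305772}, so there is no in-paper argument to compare against. Your sketch follows what is, as far as I can tell, the standard route from that source and its relatives: reduce to a single crossing change, produce $\mathbb{F}_2[U]$-module maps $f_*\colon\HFK^-(K)\to\HFK^-(J)$ and $g_*\colon\HFK^-(J)\to\HFK^-(K)$ with both composites equal to multiplication by~$U$, and then run the PID algebra to deduce $|\mathfrak{t}(K)-\mathfrak{t}(J)|\leq 1$ per crossing change. The algebraic step in your third paragraph is correct (it tacitly assumes $n=\mathfrak{t}(K)\geq 2$ so that $U^{n-2}$ is a genuine power, but the cases $n\leq 1$ are vacuous). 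You also correctly identify the load-bearing input---that the stacked cobordism map equals $U$ \emph{exactly}, neither $0$ nor a higher power---and flag the relevant technical choices (functorial decorated-cobordism maps versus the unoriented skein exact triangle, treatment of the two-component resolution $K_0$, and the grading bookkeeping needed to make $f$ and~$g$ descend to $\Gr\CFK^-$ with composite degree matching that of~$U$). As a sketch this is consistent with the result being invoked; the one genuine gap is that the central lemma $g_*\circ f_*=U=f_*\circ g_*$ is asserted and motivated rather than proved, which is exactly the part the cited reference supplies.
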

For example, since~$u(\MM_{\ell}) \leq \ell$, as is visible from \cref{fig:knots},
it follows that $\mathfrak{t}(\MM_{\ell}) \leq \ell$.
The maximal order of $U$-torsion
satisfies~\cite{zbMATH07305772,zbMATH07311838}
\[
\mathfrak{t}(K\#J) = \max(\mathfrak{t}(K), \mathfrak{t}(J)), \qquad
\mathfrak{t}(-K)   = \mathfrak{t}(K).
\]
Since the knots $\MM_{\ell}$ do not themselves have Alexander polynomial~1,
we require the following last ingredient.
\begin{proposition}[\cite{FL24}]\label{thm:galg}
Suppose two given knots $K$ and $J$ are related by a null-homologous twist.
Then there exists a knot $J'$ that is $S$-equivalent to $J$ and related
to $K$ by a crossing change.
\end{proposition}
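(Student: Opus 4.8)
The plan is to determine how a null-homologous twist changes a Seifert form, and then to reproduce exactly that change by a single crossing change. Write $c\subset S^3\setminus K$ for the unknotted curve bounding the disk $D$, so that $\mathrm{lk}(c,K)=0$ and $J$ is obtained from $K$ by a $(\pm 1)$-twist along $D$. Fix a Seifert surface $F$ of $K$; an innermost-disk argument lets me assume that $D\cap F$ is a union of $n$ arcs (so that $|K\cap D|=2n$), each arc spanned by a band of $F$ whose two boundary arcs lie in $K$. Carrying $F$ through the twist produces a Seifert surface $F'$ of $J$, and I expect the Seifert matrix to change by a rank-one correction
\[
V_J \;=\; V_K \,+\, \varepsilon\, v\, v^{\mathsf T},
\]
where, for a chosen basis $a_1,\dots,a_{2g}$ of $H_1(F)$, the vector $v=(\mathrm{lk}(a_i,c))_i$ records the linking numbers with the surgery curve and $\varepsilon=\pm 1$ is the sign of the twist. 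Establishing the precise shape of this correction is the delicate point, since $F$ does not simply ``ride along'' the twist and one must keep careful track of $D\cap F$, of signs, and of surface framings; I would treat the case $n=1$ first and then feed in the general one.

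Granting this, the proposition reduces to the following claim: for any knot $K$, any Seifert surface of $K$, any class $w$ of its first homology and any sign $\varepsilon$, there is a knot $K'$ obtained from $K$ by a single crossing change whose Seifert matrix is $S$-equivalent to $V_K+\varepsilon\, w\, w^{\mathsf T}$. Indeed, applying the claim with $w=v$ — after first enlarging $F$ by trivial stabilizations so that the matrix sizes and the sign conventions match — yields a knot $K'$ with a Seifert matrix $S$-equivalent to $V_J$, hence $S$-equivalent to $J$, and with $K'$ a crossing change away from $K$; then $J':=K'$ does the job. Here I use that two knots are $S$-equivalent precisely when they admit $S$-equivalent Seifert matrices.

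To prove the claim I would pass to a band: stabilize the Seifert surface enough that it contains an embedded arc $\gamma$ meeting the basis curves with prescribed signed multiplicities, i.e.\ realizing the vector $w$, let $\beta$ be a band-neighbourhood of $\gamma$, and insert a single full $\varepsilon$-twist into $\beta$. This replaces the two parallel, oppositely oriented boundary arcs of $\beta$ by a clasped pair, so it is exactly one crossing change performed on $K$; and since a full twist in $\beta$ is a $(\pm 1)$-twist along a small circle encircling $\beta$, whose spanning disk meets $K$ in just those two points, it alters the Seifert form precisely by $+\varepsilon\, w\, w^{\mathsf T}$. The main obstacle is the first paragraph — pinning down the effect of the twist on the Seifert form — together with the (elementary but not entirely free) surface-topology input in the last step, that an arc with an arbitrarily prescribed intersection vector exists after enough stabilization; it is exactly this input that lets one crossing change account for a twist on arbitrarily many strands, which is the scenario one should be most wary of.
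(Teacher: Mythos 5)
Your overall strategy coincides with the paper's: compute the effect of the null-homologous twist on the Seifert form as a rank-one perturbation $V\mapsto V+\varepsilon vv^{\mathsf T}$ governed by the class of $\Sigma\pitchfork D$ in $H_1(\Sigma,\partial\Sigma)$, and then realize the same perturbation by a twist along a disk meeting the Seifert surface in a \emph{single} proper arc, which is a crossing change because such a disk meets $K$ in two antiparallel points. The first step, which you call the delicate one, is in fact the routine one: the surface does survive the surgery (after arranging $D\cap F$ to consist of arcs), and the correction term follows from the standard change of linking numbers under $(\pm1)$-surgery.

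The genuine gap is in the realization step. A properly embedded arc $\gamma$ in a Seifert surface $F$ (connected boundary) represents a class in $H_1(F,\partial F;\mathbb{Z})$ that is either zero or \emph{primitive}: completing $\gamma$ by an arc of $\partial F$ gives an embedded closed curve, which is null-homologous or primitive in $H_1(F)\cong H_1(F,\partial F)$. Your target vector is $v$ extended by zeros on trivially stabilized handles, and extending by zeros does not change divisibility; so whenever $\gcd(v)\geq 2$ (e.g.\ when $D$ meets $F$ in two parallel copies of the same non-separating arc --- exactly the many-strand scenario you flag), no embedded arc realizes your $w$, and the construction stalls. The repair, which is what the paper's formulation quietly builds in by allowing the Seifert surface $\Sigma$ to be chosen anew before comparing $[\Sigma\pitchfork D]$ with $[\Sigma\pitchfork D']$, is to stabilize with a tube whose core \emph{links the twisting curve} $\partial D$ once: the intersection class on the stabilized surface becomes $(v,1,0)$, hence primitive, hence representable by an embedded arc. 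One must then also justify that twisting the stabilized surface still yields a knot $S$-equivalent to $J$; this is immediate geometrically (the stabilized surface carried through the twist along $D$ is again a Seifert surface of $J$ itself), but it is not covered by your fixed comparison with $V_K+\varepsilon vv^{\mathsf T}$ padded by zeros, and your claim as stated ("an arc with an arbitrarily prescribed intersection vector exists after enough stabilization") is false for non-primitive vectors.
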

A detailed proof of \cref{thm:galg} will appear in the upcoming paper~\cite{FL24}. Here is a proof sketch.
Let a disk $D$ be given that has algebraic intersection number zero with $K$, such that $(\pm 1)$-surgery along $\partial D$ turns $K$ into~$J$.
There exists a Seifert surface $\Sigma$ for $K$ and a disk $D'$ such that
$[\Sigma\pitchfork D] = {[\Sigma\pitchfork D']} \in H_1(\Sigma, \partial\Sigma; \mathbb{Z})$, and such that $\Sigma\pitchfork D'$ consists of a single proper arc in~$\Sigma$. Then $J'$ is obtained from $K$ by $(\pm 1)$-surgery along~$\partial D'$.

\cref{main:thm} is now inferred by applying the following lemma to~$K = \MM_6$.
\begin{lemma}\label{lem:main}
Let $K$ be a knot with the following two properties for some~$n\geq 2$:
\begin{enumerate}
\item The maximal order of $U$-torsion in $\HFK^-$ of $K$ is at least~$n$.
\item By a single null-homologous twist, $K$ may be transformed into a knot $J$ with Alexander polynomial~1.
\end{enumerate}
Then there is a smoothly doubly slice, Alexander polynomial~1,
amphicheiral knot $L$ with unknotting number at least~$n - 1$.
\end{lemma}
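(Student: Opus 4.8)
The plan is to reduce property~(2) to a single crossing change via \cref{thm:galg}, to apply the torsion bound of \cref{thm:utorsion}, and then to symmetrise. First I would apply \cref{thm:galg} to $K$ and the knot $J$ of property~(2), which are related by a null-homologous twist; this produces a knot $J'$ that is $S$-equivalent to $J$ and obtained from $K$ by a single crossing change. As the Alexander polynomial is an invariant of $S$-equivalence, $J'$ again has Alexander polynomial~1. Since $K$ and $J'$ differ by one crossing change, their Gordian distance is at most~$1$, so \cref{thm:utorsion} gives $|\mathfrak{t}(K)-\mathfrak{t}(J')|\leq 1$; together with property~(1) this forces $\mathfrak{t}(J')\geq n-1$.

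Next I would set $\beta := J'\#(-J')$, the connected sum of $J'$ with its concordance inverse $-J'$ (the reverse mirror image), and $L := \beta\#\overline{\beta}$, the connected sum of $\beta$ with its mirror image $\overline{\beta}$; then I would verify the required properties of $L$ in turn. Since the Alexander polynomial is multiplicative under connected sum and invariant under mirroring and reversal, and $L$ is a connected sum of four knots each with Alexander polynomial~1, the knot $L$ has Alexander polynomial~1. Since connected sum is commutative, $\overline{L}=\overline{\beta}\#\beta\cong\beta\#\overline{\beta}=L$, so $L$ is amphicheiral. By the connected-sum formula $\mathfrak{t}(A\#B)=\max(\mathfrak{t}(A),\mathfrak{t}(B))$ recalled above, $\mathfrak{t}(L)\geq\mathfrak{t}(\beta)\geq\mathfrak{t}(J')\geq n-1$, so \cref{thm:utorsion}, applied to $L$ and the unknot (which has $\mathfrak{t}=0$), gives $u(L)\geq\mathfrak{t}(L)\geq n-1$. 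Finally, $\beta=J'\#(-J')$ is smoothly doubly slice --- a well-known fact about the connected sum of a knot with its concordance inverse --- hence so is its mirror image $\overline{\beta}$, and therefore so is the connected sum $L=\beta\#\overline{\beta}$.

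The real content is packaged in \cref{thm:galg}: a null-homologous twist cannot in general be realised by boundedly many crossing changes, so without that proposition there is no control on the Gordian distance between $K$ and the Alexander-polynomial-one knot $J$, hence no way to bring property~(1) to bear. Everything after that step is formal. The only non-routine ingredient is the classical fact that $K\#(-K)$ is smoothly doubly slice, and the one point requiring care is that the single symmetrisation $L=\beta\#\overline{\beta}$ retains all four desired properties simultaneously --- Alexander polynomial~1, amphichirality, torsion order at least~$n-1$, and smooth double sliceness --- which the four checks above establish.
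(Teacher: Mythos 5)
Your proposal is correct and follows the same strategy as the paper's proof: apply \cref{thm:galg} to replace the null-homologous twist with a single crossing change to a knot $J'$ with Alexander polynomial~1, use \cref{thm:utorsion} to deduce $\mathfrak{t}(J')\geq n-1$, and then symmetrise via connected sum. The one deviation is that you take $L=\beta\#\overline{\beta}$ with $\beta = J'\#(-J')$, performing an extra mirroring step to guarantee amphicheirality, whereas the paper stops at $L = J'\#(-J')$. That extra step is unnecessary: writing $-K$ for the reversed mirror image, one has $-(J'\#(-J'))=(-J')\#J' \cong J'\#(-J')$, so $J'\#(-J')$ is already negatively amphicheiral, and in particular amphicheiral as an unoriented knot, which is all that is claimed. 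Your caution is understandable --- you are directly verifying $L\cong\overline{L}$ (positive amphicheirality), which is not a priori the same thing --- and the extra sum preserves all four properties, so the argument is sound; it is just slightly longer than it needs to be. One small point worth flagging in your version: to conclude $\mathfrak{t}(L)\geq\mathfrak{t}(\beta)$ you only need the $\geq$ direction of the formula $\mathfrak{t}(A\#B)=\max(\mathfrak{t}(A),\mathfrak{t}(B))$, so you never actually need to know $\mathfrak{t}(\overline{\beta})$; this is fortunate, since the invariance result quoted in the paper is for $\mathfrak{t}(-K)$, the reversed mirror, rather than the plain mirror $\overline{K}$.
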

\begin{proof}
By \cref{thm:galg}, there is a knot $J'$ that is related to $K$ by a crossing change,
and $S$-equivalent to~$J$. So $J'$ has Alexander polynomial~1 itself.
By \cref{thm:utorsion}, $\mathfrak{t}(J') \geq n - 1$.
Let~$L = J' \#\, {-J'}$. Then, being the sum of a knot and its concordance inverse, $L$ is smoothly doubly slice~\cite{zbMATH03218554} and amphicheiral. Since $J'$ has Alexander polynomial~1, so has~$L$.
Lastly,
$\mathfrak{t}(L) = \max(\mathfrak{t}(J'), \mathfrak{t}(-J')) = \mathfrak{t}(J') \geq n - 1$.
So by \cref{thm:utorsion}, $u(L) \geq n - 1$.
\end{proof}
\begin{remark}
The proof of \cref{thm:galg} is constructive.
Hence the knots $J'$ and $L$ produced in \cref{lem:main},
and the knot proving \cref{main:thm}, could be made explicit (given sufficient commitment).
\end{remark}
\begin{remark}
One could alternatively employ similar lower bounds for the unknotting number coming from other knot homologies, such as the maximal order of $h$-torsion in Bar-Natan homology~\cite{zbMATH07178864}.
However, current computer programs only calculate Bar-Natan homology of $\MM_{\ell}$ in reasonable time for~$\ell \leq 3$, and---much as knot Floer homology---Bar-Natan homology of $\MM_{\ell}$ appears difficult to compute manually.
\end{remark}

\subsection*{Acknowledgments} The author thanks Marco Marengon, Peter Feller, Sashka Kjuchukova and Sebastian Baader for inspiring conversation.
\bibliographystyle{myamsalpha}
\bibliography{References}

\newcommand{\etalchar}[1]{$^{#1}$}
\providecommand{\bysame}{\leavevmode\hbox to3em{\hrulefill}\thinspace}
\providecommand{\MR}{\relax\ifhmode\unskip\space\fi MR }
\providecommand{\MRhref}[2]{%
  \href{http://www.ams.org/mathscinet-getitem?mr=#1}{#2}
}
\providecommand{\href}[2]{#2}
\begin{thebibliography}{BBP{\etalchar{+}}25}

\bibitem[Ali19]{zbMATH07178864}
A.~Alishahi: \emph{Unknotting number and {Khovanov} homology}, Pac. J. Math.
  \textbf{301} (2019), no.~1, 15--29. \xox{ZB}{1439.57001},
  \xox{arXiv}{1710.07874}.

\bibitem[AE20]{zbMATH07305772}
A.~Alishahi and E.~Eftekhary: \emph{Knot {Floer} homology and the unknotting
  number}, Geom. Topol. \textbf{24} (2020), no.~5, 2435--2469.
  \xox{ZB}{1464.57018}, \xox{arXiv}{1810.05125}.

\bibitem[BH25]{arXiv:2506.24088}
M.~Brittenham and S.~Hermiller: \emph{Unknotting number is not additive under
  connected sum}, 2025. \xox{arXiv}{2506.24088}.

\bibitem[BBP{\etalchar{+}}25]{regina}
B.~A. Burton, R.~Budney, W.~Pettersson, et~al.: \emph{Regina: Software for
  low-dimensional topology}, \url{http://regina-normal.github.io/}, 1999--2025.

\bibitem[FL]{FL24}
P.~Feller and L.~Lewark: \emph{Balanced {G}ordian distance, bilinear forms, and
  cobordisms},  upcoming paper.

\bibitem[Hom17]{zbMATH06696663}
J.~Hom: \emph{A survey on {Heegaard} {Floer} homology and concordance}, J. Knot
  Theory Ramifications \textbf{26} (2017), no.~2, 24. \xox{ZB}{1360.57002},
  \xox{arXiv}{1512.00383},  Id/No 1740015.

\bibitem[JMZ20]{zbMATH07311838}
A.~Juh{\'a}sz, M.~Miller, and I.~Zemke: \emph{Knot cobordisms, bridge index,
  and torsion in {Floer} homology}, J. Topol. \textbf{13} (2020), no.~4,
  1701--1724. \xox{ZB}{1477.57015}, \xox{arXiv}{1904.02735}.

\bibitem[Lew25]{github:LLewark/hfk-computations}
L.~Lewark: \emph{Computer calculations accompanying this paper}, 2025.
  \url{https://github.com/LLewark/hfk-computations}.

\bibitem[MM20]{zbMATH07144520}
C.~Manolescu and M.~Marengon: \emph{The {K}night {M}ove {C}onjecture is false},
  Proc. Am. Math. Soc. \textbf{148} (2020), no.~1, 435--439.
  \xox{ZB}{1432.57028}, \xox{arXiv}{1809.09769}.

\bibitem[OS03]{zbMATH02057402}
P.~Ozsv{\'a}th and Z.~Szab{\'o}: \emph{Knot {Floer} homology and the four-ball
  genus}, Geom. Topol. \textbf{7} (2003), 615--639. \xox{ZB}{1037.57027},
  \xox{arXiv}{math/0301149}.

\bibitem[Ras03]{rasmussen2003floer}
J.~Rasmussen: \emph{Floer homology and knot complements}, 2003.
  \xox{arXiv}{math/0306378},  Thesis (Ph.D.)--Harvard University.

\bibitem[Sch85]{zbMATH03921530}
M.~G. Scharlemann: \emph{Unknotting number one knots are prime}, Invent. Math.
  \textbf{82} (1985), 37--55. \xox{ZB}{0576.57004}.

\bibitem[Sza17]{hfk}
Z.~Szab\'{o}: \emph{Knot {F}loer {H}omology {C}alculator version 3}, 2017.
  Retrieved 2024, \url{https://web.math.princeton.edu/~szabo/HFKcalc.html}.

\bibitem[Wen37]{zbMATH03026573}
H.~Wendt: \emph{Die gordische {Aufl{\"o}sung} von {Knoten}}, Math. Z.
  \textbf{42} (1937), 680--696. \xox{ZB}{0016.42005}.

\bibitem[Zee65]{zbMATH03218554}
E.~C. Zeeman: \emph{Twisting spun knots}, Trans. Am. Math. Soc. \textbf{115}
  (1965), 471--495. \xox{ZB}{0134.42902}.

\end{thebibliography}

\begin{table}[p]
\centering
\scalebox{.75}{%
\begin{tblr}[b]{hlines={0.7pt, solid}, vlines={0.7pt, solid}, hline{3-Y} = {0.3pt, dashed}, vline{3-Y} = {0.3pt, dashed}, rowspec={*{4}c}, rows={9mm}, columns={9mm}, rowsep=0mm, colsep=0mm}
	& $-1$	& $0$	& $1$	\\
$-1$	& $1$	&  	&  	\\
$0$	&  	& $3$	&  	\\
$1$	&  	&  	& $1$	\\
\end{tblr}}%
\hspace*{1cm}%
\scalebox{.75}{%
\begin{tblr}[b]{hlines={0.7pt, solid}, vlines={0.7pt, solid}, hline{3-Y} = {0.3pt, dashed}, vline{3-Y} = {0.3pt, dashed}, rowspec={*{7}c}, rows={9mm}, columns={9mm}, rowsep=0mm, colsep=0mm}
 	& $-2$	& $-1$	& $0$	& $1$	& $2$	\\
$-4$	& $1$	&  	&  	&  	&  	\\
$-3$	& $1$	& $4$	&  	&  	&  	\\
$-2$	&  	& $2$	& $6$	&  	&  	\\
$-1$	&  	&  \SetCell{red!25}	& $3$	& $4$	&  	\\
$0$	&  	&  	& \SetCell{green!25}$2$	& $2$	& $1$	\\
$1$	&  	&  	&  	& \SetCell{red!25} 	& $1$	\\
\end{tblr}}
\caption{Knot Floer homologies $\widehat{HFK}$ of $\MM_1$ and~$\MM_2$, as computed with~\cite{hfk}.
The Maslov grading is given by the row, and the Alexander grading by the column.
The green cell contains a homology class that must have $U$-torsion order~2{} in~$\HFK^-(\MM_2)$,
because the red cells are empty.
See also~\cite{github:LLewark/hfk-computations}.}%
\label{table:hom1_2}
\end{table}
\begin{table}[p]%
\scalebox{.75}{%
\begin{tblr}[b]{
        hlines={0.7pt, solid}, vlines={0.7pt, solid},
        hline{3-Y} = {0.3pt, dashed}, vline{3-Y} = {0.3pt, dashed},
        rowspec={*{11}c}, rows={9mm}, columns={9mm},
        rowsep=0mm, colsep=0mm
    }
 	& $-3$	& $-2$	& $-1$	& $0$	& $1$	& $2$	& $3$	\\
$-8$	&  	& $2$	&  	&  	&  	&  	&  	\\
$-7$	&  	&  	& $8$	&  	&  	&  	&  	\\
$-6$	& $2$	&  	&  	& $12$	&  	&  	&  	\\
$-5$	& $2$	& $8$	&  	&  	& $8$	&  	&  	\\
$-4$	&  	& $6$	& $14$	&  	&  	& $2$	&  	\\
$-3$	&  	& \SetCell{red!25} 	& $9$	& $18$	&  	&  	&  	\\
$-2$	&  	&  	&  	& $11$	& $14$	&  	&  	\\
$-1$	&  	&  	& \SetCell{red!25} 	&  	& $9$	& $8$	&  	\\
$0$	&  	&  	&  	& \SetCell{green!25}$2$	&  	& $6$	& $2$	\\
$1$	&  	&  	&  	&  	& \SetCell{red!25} 	&  	& $2$	\\
\end{tblr}
}\hfill
\scalebox{.75}{%
\begin{tblr}[b]{
        hlines={0.7pt, solid}, vlines={0.7pt, solid},
        hline{3-Y} = {0.3pt, dashed}, vline{3-Y} = {0.3pt, dashed},
        rowspec={*{18}c}, rows={9mm}, columns={9mm},
        rowsep=0mm, colsep=0mm
    }
 	& $-4$	& $-3$	& $-2$	& $-1$	& $0$	& $1$	& $2$	& $3$	& $4$	\\
$-15$	&  	& $1$	&  	&  	&  	&  	&  	&  	&  	\\
$-14$	&  	&  	& $6$	&  	&  	&  	&  	&  	&  	\\
$-13$	&  	&  	&  	& $15$	&  	&  	&  	&  	&  	\\
$-12$	&  	&  	&  	&  	& $20$	&  	&  	&  	&  	\\
$-11$	& $1$	&  	&  	&  	&  	& $15$	&  	&  	&  	\\
$-10$	&  	& $8$	&  	&  	&  	&  	& $6$	&  	&  	\\
$-9$	&  	&  	& $24$	&  	&  	&  	&  	& $1$	&  	\\
$-8$	& $4$	&  	&  	& $40$	&  	&  	&  	&  	&  	\\
$-7$	& $3$	& $19$	&  	&  	& $48$	&  	&  	&  	&  	\\
$-6$	&  	& $12$	& $44$	&  	& $2$	& $40$	&  	&  	&  	\\
$-5$	&  	&  \SetCell{red!25}	& $26$	& $69$	&  	&  	& $24$	&  	&  	\\
$-4$	&  	&  	&  	& $40$	& $78$	&  	&  	& $8$	&  	\\
$-3$	&  	&  	&  \SetCell{red!25}	&  	& $47$	& $69$	&  	&  	& $1$	\\
$-2$	&  	&  	&  	&  	& $2$	& $40$	& $44$	&  	&  	\\
$-1$	&  	&  	&  	&  \SetCell{red!25}	&  	&  	& $26$	& $19$	&  	\\
$0$	&  	&  	&  	&  	& \SetCell{green!25}$2$	&  	&  	& $12$	& $4$	\\
$1$	&  	&  	&  	&  	&  	& \SetCell{red!25} 	&  	&  	& $3$	\\
\end{tblr}}%
\caption{Homologies of $\MM_3$ and~$\MM_4$. See \cref{table:hom1_2} for explanations.}%
\label{table:hom3_4}
\end{table}

\begin{table}[p]
\scalebox{.525}{
\begin{tblr}[b]{
        hlines={0.7pt, solid}, vlines={0.7pt, solid},
        hline{3-Y} = {0.3pt, dashed}, vline{3-Y} = {0.3pt, dashed},
        rowspec={*{26}c}, rows={8.8mm}, columns={8.8mm},
        rowsep=0mm, colsep=0mm
    }
 	& $-5$	& $-4$	& $-3$	& $-2$	& $-1$	& $0$	& $1$	& $2$	& $3$	& $4$	& $5$	\\
$-23$	&  	&  	& $2$	&  	&  	&  	&  	&  	&  	&  	&  	\\
$-22$	&  	&  	&  	& $12$	&  	&  	&  	&  	&  	&  	&  	\\
$-21$	&  	&  	&  	&  	& $30$	&  	&  	&  	&  	&  	&  	\\
$-20$	&  	&  	&  	&  	&  	& $40$	&  	&  	&  	&  	&  	\\
$-19$	&  	&  	&  	&  	&  	&  	& $30$	&  	&  	&  	&  	\\
$-18$	&  	&  	&  	&  	&  	&  	&  	& $12$	&  	&  	&  	\\
$-17$	&  	& $4$	&  	&  	&  	&  	&  	&  	& $2$	&  	&  	\\
$-16$	&  	&  	& $24$	&  	&  	&  	&  	&  	&  	&  	&  	\\
$-15$	&  	&  	&  	& $64$	&  	&  	&  	&  	&  	&  	&  	\\
$-14$	&  	&  	&  	&  	& $104$	&  	&  	&  	&  	&  	&  	\\
$-13$	& $2$	&  	&  	&  	&  	& $122$	&  	&  	&  	&  	&  	\\
$-12$	&  	& $20$	&  	&  	&  	& $2$	& $104$	&  	&  	&  	&  	\\
$-11$	&  	&  	& $74$	&  	&  	&  	&  	& $64$	&  	&  	&  	\\
$-10$	& $8$	&  	&  	& $154$	&  	&  	&  	&  	& $24$	&  	&  	\\
$-9$	& $6$	& $44$	&  	&  	& $220$	&  	&  	&  	&  	& $4$	&  	\\
$-8$	&  	& $28$	& $122$	&  	&  	& $242$	&  	&  	&  	&  	&  	\\
$-7$	&  	&  \SetCell{red!25}	& $70$	& $232$	&  	&  	& $220$	&  	&  	&  	&  	\\
$-6$	&  	&  	&  	& $130$	& $334$	& $2$	&  	& $154$	&  	&  	&  	\\
$-5$	&  	&  	&  \SetCell{red!25}	&  	& $193$	& $378$	&  	&  	& $74$	&  	&  	\\
$-4$	&  	&  	&  	&  	&  	& $221$	& $334$	&  	&  	& $20$	&  	\\
$-3$	&  	&  	&  	&  \SetCell{red!25}	& 	&  	& $193$	& $232$	&  	&  	& $2$	\\
$-2$	&  	&  	&  	&  	& 	& $2$	&  	& $130$	& $122$	&  	&  	\\
$-1$	&  	&  	&  	&  	& \SetCell{red!25}	&  	&  	&  	& $70$	& $44$	&  	\\
$0$	&  	&  	&  	&  	&  	& \SetCell{green!25}$2$	&  	&  	&  	& $28$	& $8$	\\
$1$	&  	&  	&  	&  	&  	&  	& \SetCell{red!25} 	&  	&  	&  	& $6$	\\
\end{tblr}}%
\hfill
\scalebox{.525}{%
\begin{tblr}[b]{hlines={0.7pt, solid}, vlines={0.7pt, solid}, hline{3-Y} = {0.3pt, dashed}, vline{3-Y} = {0.3pt, dashed}, rowspec={*{37}c}, rows={8.8mm}, columns={8.8mm}, rowsep=0mm, colsep=0mm}
 	& $-6$	& $-5$	& $-4$	& $-3$	& $-2$	& $-1$	& $0$	& $1$	& $2$	& $3$	& $4$	& $5$	& $6$	\\
$-34$	&  	&  	& $1$	&  	&  	&  	&  	&  	&  	&  	&  	&  	&  	\\
$-33$	&  	&  	&  	& $8$	&  	&  	&  	&  	&  	&  	&  	&  	&  	\\
$-32$	&  	&  	&  	&  	& $28$	&  	&  	&  	&  	&  	&  	&  	&  	\\
$-31$	&  	&  	&  	&  	&  	& $56$	&  	&  	&  	&  	&  	&  	&  	\\
$-30$	&  	&  	&  	&  	&  	&  	& $70$	&  	&  	&  	&  	&  	&  	\\
$-29$	&  	&  	&  	&  	&  	&  	&  	& $56$	&  	&  	&  	&  	&  	\\
$-28$	&  	&  	&  	&  	&  	&  	&  	&  	& $28$	&  	&  	&  	&  	\\
$-27$	&  	&  	&  	&  	&  	&  	&  	&  	&  	& $8$	&  	&  	&  	\\
$-26$	&  	& $2$	&  	&  	&  	&  	&  	&  	&  	&  	& $1$	&  	&  	\\
$-25$	&  	&  	& $18$	&  	&  	&  	&  	&  	&  	&  	&  	&  	&  	\\
$-24$	&  	&  	&  	& $70$	&  	&  	&  	&  	&  	&  	&  	&  	&  	\\
$-23$	&  	&  	&  	&  	& $160$	&  	&  	&  	&  	&  	&  	&  	&  	\\
$-22$	&  	&  	&  	&  	&  	& $248$	&  	&  	&  	&  	&  	&  	&  	\\
$-21$	&  	&  	&  	&  	&  	&  	& $286$	&  	&  	&  	&  	&  	&  	\\
$-20$	& $1$	&  	&  	&  	&  	&  	& $2$	& $248$	&  	&  	&  	&  	&  	\\
$-19$	&  	& $16$	&  	&  	&  	&  	&  	&  	& $160$	&  	&  	&  	&  	\\
$-18$	&  	&  	& $83$	&  	&  	&  	&  	&  	&  	& $70$	&  	&  	&  	\\
$-17$	&  	&  	&  	& $236$	&  	&  	&  	&  	&  	&  	& $18$	&  	&  	\\
$-16$	&  	&  	&  	&  	& $451$	&  	&  	&  	&  	&  	&  	& $2$	&  	\\
$-15$	& $6$	&  	&  	&  	&  	& $644$	&  	&  	&  	&  	&  	&  	&  	\\
$-14$	&  	& $52$	&  	&  	&  	&  	& $720$	&  	&  	&  	&  	&  	&  	\\
$-13$	&  	&  	& $200$	&  	&  	&  	&  	& $644$	&  	&  	&  	&  	&  	\\
$-12$	& $15$	&  	&  	& $484$	&  	&  	& $2$	&  	& $451$	&  	&  	&  	&  	\\
$-11$	& $10$	& $96$	&  	&  	& $858$	&  	&  	&  	&  	& $236$	&  	&  	&  	\\
$-10$	&  	& $58$	& $316$	&  	&  	& $1192$	&  	&  	&  	&  	& $83$	&  	&  	\\
$-9$	&  	&  \SetCell{red!25}	& $182$	& $716$	&  	&  	& $1330$	&  	&  	&  	&  	& $16$	&  	\\
$-8$	&  	&  	&  	& $406$	& $1236$	&  	&  	& $1192$	&  	&  	&  	&  	& $1$	\\
$-7$	&  	&  	&  \SetCell{red!25}	&  	& $697$	& $1696$	&  	&  	& $858$	&  	&  	&  	&  	\\
$-6$	&  	&  	&  	&  	&  	& $950$	& $1882$	&  	&  	& $484$	&  	&  	&  	\\
$-5$	&  	&  	&  	&  \SetCell{red!25}	&  	&  	& $1051$	& $1696$	&  	&  	& $200$	&  	&  	\\
$-4$	&  	&  	&  	&  	&  	&  	&  	& $950$	& $1236$	&  	&  	& $52$	&  	\\
$-3$	&  	&  	&  	&  	&  \SetCell{red!25}	&  	&  	&  	& $697$	& $716$	&  	&  	& $6$	\\
$-2$	&  	&  	&  	&  	&  	&  	& $2$	&  	&  	& $406$	& $316$	&  	&  	\\
$-1$	&  	&  	&  	&  	&  	& \SetCell{red!25} 	&  	&  	&  	&  	& $182$	& $96$	&  	\\
$0$	&  	&  	&  	&  	&  	&  	& \SetCell{green!25}$2$	&  	&  	&  	&  	& $58$	& $15$	\\
$1$	&  	&  	&  	&  	&  	&  	&  	&  \SetCell{red!25}	&  	&  	&  	&  	& $10$	\\
\end{tblr}}
\caption{Homology of $\MM_5$ and~$\MM_6$. See \cref{table:hom1_2} for explanations.}%
\label{table:hom5_6}
\end{table}
\end{document}